\definecolor{webblue}{rgb}{0,.5,0}
\definecolor{webred}{rgb}{0,.5,0}
\definecolor{webbrown}{rgb}{.6,0,0}
\newtheorem{thm}{Theorem}[section]
\newtheorem{cor}[thm]{Corollary}
\newtheorem{prop}[thm]{Proposition}
\newtheorem{ex}[thm]{Example}
\newtheorem{f}{Fact}[section]
\theoremstyle{definition}
\theoremstyle{remark}
\newtheorem{rem}[thm]{Remark}
\numberwithin{equation}{section}
\title{Linear transformations and strong $q$-log-concavity for certain combinatorial triangle
\thanks{Supported partially by the National Natural Science Foundation of China (Grant No. 11571150).
\newline\hspace*{5mm}
   {\it Email address:}\quad
    bxzhu@jsnu.edu.cn (B.-X. Zhu)}}
\author{Bao-Xuan Zhu}
\date{\footnotesize School of Mathematics and Statistics,
         Xuzhou Normal University,
         Xuzhou 221116, PR China}
\begin{document}

\maketitle

\begin{abstract}
It is well-known that the binomial transformation  preserves the
log-concavity property and log-convexity property. Let
$\binom{a+n}{b+k}$ be the binomial coefficients and $\binom{n,k}{j}$
be defined by
$(b_0+b_1x+\cdots+b_kx^{k})^n:=\sum_{j=0}^{kn}\binom{n,k}{j}x^j,$
where the sequence $(b_i)_{0\leq i\leq k}$ is log-concave. In this
paper, we prove that the linear transformation
$$y_n(q)=\sum_{k=0}^n\binom{a+n}{b+k}x_k(q)$$ preserves the strong $q$-log-concavity
property for any fixed nonnegative integers $a$ and $b$, which
strengthens and gives a simple proof of results of Ehrenborg and
Steingrimsson, and Wang, respectively, on linear transformations
preserving the log-concavity property. We also show that the linear
transformation
$$y_n=\sum_{i=0}^{kn}\binom{n,k}{j}x_i$$ not only preserves the log-concavity property, but also preserves the log-convexity
property, which extends the results of Ahmia and Belbachir about the
$s$-triangle transformation preserving the log-convexity property
and log-concavity property. Let $[A_{n,k}(q)]_{n, k\geq0}$ be an
infinite lower triangular array of polynomials in $q$ with
nonnegative coefficients satisfying the recurrence
\begin{eqnarray*}\label{re}
A_{n,k}(q)=f_{n,k}(q)\,A_{n-1,k-1}(q)+g_{n,k}(q)\,A_{n-1,k}(q)+h_{n,k}(q)\,A_{n-1,k+1}(q),
\end{eqnarray*}
for $n\geq 1$ and $k\geq 0$, where $A_{0,0}(q)=1$,
$A_{0,k}(q)=A_{0,-1}(q)=0$ for $k>0$. We present criterions for the
strong $q$-log-concavity of the sequences in each row of
$[A_{n,k}(q)]_{n, k\geq0}$. As applications, we get the strong
$q$-log-concavity or the log-concavity of the sequences in each row
of many well-known triangular arrays, such as the Bell polynomials
triangle, the Eulerian polynomials triangle and the Narayana
polynomials triangle in a unified approach.
\bigskip\\
{\sl MSC:}\quad 05A10; 05A20; 11B73; 15B36.
\bigskip\\
{\sl Keywords:}\quad Combinatorial triangles; Log-concavity; Strong
$q$-log-concavity; Linear transformations
\end{abstract}

\section{Introduction}
\hspace*{\parindent}
Let $(a_n)_{n\geq0}$ be a sequence of nonnegative real numbers. It
has no {\it internal zeros }, {\it i.e.}, $a_j\neq0$ if
$a_ia_k\neq0$ for $i< j< k$. Thoughout this paper, all sequences
have no internal zeros. We call it {\it log-concave} if
$a_{k-1}a_{k+1}\le a_k^2$ for all $k\ge 1$. The log-concave
sequences arise often in combinatorics, algebra, geometry, analysis,
probability and statistics and have been extensively investigated,
see Stanley~\cite{Sta89} and Brenti~\cite{Bre94} for details.

For two polynomials with real coefficients $f(q)$ and $g(q)$, denote
$f(q)\geq_q g(q)$ if the difference
$f\left(q\right)-g\left(q\right)$ has only nonnegative coefficients.
For a polynomial sequence $(f_n(q))_{n\geq 0}$, it is called {\it
$q$-log-concave} suggested by Stanley if
$$f_n(q)^2\geq_q f_{n+1}(q)f_{n-1}(q),$$ for $n\geq 1$ and is called {\it strongly $q$-log-concave} introduced by Sagan if
$$f_{n+1}(q)f_{m-1}(q)\geq_q f_n(q)f_m(q),$$
for any $m\geq n\geq1$. Obviously, the strong $q$-log-concavity
implies the $q$-log-concavity.  The q-log-concavity of polynomials
have been extensively studied, see Butler \cite{But90},
Krattenthaler \cite{Kra}, Leroux \cite{Le90}, Sagan
\cite{Sag92,Sag921}, and Su, Wang and Yeh \cite{SWY11} for instance.
If we reverse the inequalities in above definitions, then we have
the concepts for log-convexity, $q$-log-convexity and strong
$q$-log-convexity, respectively. Reader can refer to Chen {\it et
al.} \cite{CWY10,CTWY10,CWY11}, Liu and Wang \cite{LW07}, Zhu
\cite{Zhu13,Zhu14}, and Zhu and Sun \cite{ZS15} for the strong
$q$-log-convexity.

The linear transformations are often used to study the log-concavity
and log-convexity. For instance, it was proved that the binomial
transformation $b_n=\sum_{k=0}^n\binom{n}{k}a_k$ for $n\geq0$
preserves the log-concavity property, {\it i.e.}, log-concavity of
$(a_n)_{n\geq 0}$ implies that of $(b_n)_{n\geq 0}$ (see
\cite[Theorem 2.5.7]{Bre89} or \cite[Theorem 7.3]{Kar68} for
instance) and log-convexity property \cite{LW07}. More generally,
the log-convexity property and log-concavity property are preserved
under the binomial convolution, see Davenport and
P\'olya~\cite{DP49} and Wang and Yeh~\cite{WY07}, respectively.
Using certain positivity method, Ehrenborg and Steingrimsson
\cite{ES00} also showed that the linear transformation
$$y_n=\sum_{k=0}^n\binom{n+1}{k}x_k$$ preserves the log-concavity
property. Furthermore, using the algebraical method, Wang \cite{W03}
proved one more general result that the linear transformation
$$y_n=\sum_{k=0}^n\binom{a+n}{b+k}x_k$$ preserves the log-concavity
property for fixed nonnegative integers $a$ and $b$.  Motivated by
these, we prove the following stronger result.
\begin{thm}\label{thm+binom}
Let $a$ and $b$ be two
nonnegative integers. Then the linear transformation
$$y_n(q)=\sum_{k=0}^n\binom{a+n}{b+k}x_k(q)$$ preserves the strong $q$-log-concavity
property. In particular, it preserves the log-concavity property of
sequences.
\end{thm}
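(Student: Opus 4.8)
The plan is to reduce first to the case $b=0$ and then, in that case, to expand the relevant difference, collapse it by Pascal's rule, and reindex and symmetrize so that it becomes a sum of manifestly nonnegative terms.

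For the reduction, set $\hat{x}_j(q)=x_{j-b}(q)$ for $j\ge b$ and $\hat{x}_j(q)=0$ for $0\le j<b$. Then $y_n(q)=\sum_{j\ge0}\binom{a+n}{j}\hat{x}_j(q)$, since every omitted term $\binom{a+n}{j}\hat{x}_j(q)$ with $j<b$ vanishes, so that $(y_n(q))$ is the image of $(\hat{x}_j(q))$ under the transformation with parameters $a$ and $0$. Moreover $(\hat{x}_j(q))_{j\ge0}$ is again strongly $q$-log-concave (and has no internal zeros): each of its defining inequalities $\hat{x}_k(q)\hat{x}_l(q)\ge_q\hat{x}_{k-1}(q)\hat{x}_{l+1}(q)$, $l\ge k$, either reduces after shifting all indices down by $b$ to a defining inequality for $(x_k(q))$, or, when $k\le b$, has right-hand side $0$. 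Hence it suffices to prove the theorem for $b=0$, which I assume henceforth.

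So I must show $y_{n+1}(q)y_{m-1}(q)\ge_q y_n(q)y_m(q)$ for all $m\ge n+1$. Expanding the difference and applying Pascal's rule in the forms $\binom{a+n+1}{i}=\binom{a+n}{i}+\binom{a+n}{i-1}$ and $\binom{a+m}{j}=\binom{a+m-1}{j}+\binom{a+m-1}{j-1}$ gives
$$y_{n+1}(q)y_{m-1}(q)-y_n(q)y_m(q)=\sum_{i,j\ge0}\left[\binom{a+n}{i-1}\binom{a+m-1}{j}-\binom{a+n}{i}\binom{a+m-1}{j-1}\right]x_i(q)x_j(q).$$
Shifting $i\mapsto i+1$ in the first part of the bracket and $j\mapsto j+1$ in the second — no boundary term is produced, because $\binom{a+n}{-1}=0$ — and then symmetrizing in $i$ and $j$ should rewrite the right-hand side as
$$\sum_{0\le i<j}\left[\binom{a+n}{i}\binom{a+m-1}{j}-\binom{a+n}{j}\binom{a+m-1}{i}\right]\bigl(x_{i+1}(q)x_j(q)-x_i(q)x_{j+1}(q)\bigr).$$
Here both factors of each summand are nonnegative: for $i<j$ one has $x_{i+1}(q)x_j(q)\ge_q x_i(q)x_{j+1}(q)$ by the strong $q$-log-concavity of $(x_k(q))$, and $\binom{a+n}{i}\binom{a+m-1}{j}\ge\binom{a+n}{j}\binom{a+m-1}{i}$ because $a+m-1\ge a+n$ together with the total positivity of binomial coefficients (equivalently, $N\mapsto\binom{N}{j}/\binom{N}{i}$ is nondecreasing for $i<j$). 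Summing these nonnegative polynomials establishes the strong $q$-log-concavity of $(y_n(q))$; the statement for sequences is the special case of constant polynomials, because a nonnegative log-concave sequence with no internal zeros is automatically strongly log-concave.

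There is no hard estimate in this argument; the content lies in the Pascal collapse above and in the bookkeeping of the shift-and-symmetrize step. The one point deserving care is the claim that prepending zeros preserves strong $q$-log-concavity, since this is exactly what legitimizes the reduction to $b=0$ and, after it, removes the boundary term $x_0(q)\bigl(\binom{a+n}{b-1}y_{m-1}(q)-\binom{a+m-1}{b-1}y_n(q)\bigr)$ that would otherwise appear. (That term is itself $\ge_q0$, by the same monotonicity of binomial ratios, so one could equally argue directly with $b$ arbitrary; the reduction merely makes this unnecessary.)
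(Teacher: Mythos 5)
Your core computation --- the Pascal collapse, the shift, the symmetrization, and the $\mathrm{TP}_2$ property of the Pascal triangle --- is clean and correct, but it proves a different statement from the theorem whenever $a>b$. The theorem's sum is truncated at $k=n$, i.e.\ $y_n(q)=\sum_{k=0}^{n}\binom{a+n}{b+k}x_k(q)$, whereas your identity $y_n(q)=\sum_{j\ge0}\binom{a+n}{j}\hat x_j(q)$ silently replaces it by the full sum $\sum_{k\ge 0}\binom{a+n}{b+k}x_k(q)=\sum_{k=0}^{a+n-b}\binom{a+n}{b+k}x_k(q)$. You check that the omitted terms with $j<b$ vanish, but not the terms with $k>n$: these carry the coefficients $\binom{a+n}{b+k}$ for $n<k\le a+n-b$, which are positive exactly when $a>b$. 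Concretely, for $a=1$, $b=0$, $n=0$ the theorem's $y_0(q)$ equals $x_0(q)$ while your right-hand side is $x_0(q)+x_1(q)$; already the base case is a different transformation. Because each $y_j$ truncates at its own index $j$, you also cannot repair this by padding the input sequence with zeros at the top. For $a\le b$ the truncated and untruncated sums coincide and your argument is complete; for $a>b$, redoing the expansion with the truncated sums produces top-boundary terms such as $+\binom{a+n}{b+n+1}x_{n+1}(q)y_{m-1}(q)$ and $-\binom{a+m-1}{b+m}x_m(q)y_n(q)$, the second of which has the wrong sign a priori, and controlling these is exactly where the remaining work lies.

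For comparison, the paper avoids the double-sum expansion entirely: it argues by induction on $n$, rewriting $y_n(q)$ as the same transformation with $a+n$ replaced by $a+n-1$ applied to the sequence $\bigl(x_k(q)+x_{k+1}(q)\bigr)_{k\ge0}$, which inherits strong $q$-log-concavity (Fact 2.1), and checks the cases $n\le 3$ by hand. Your shift-and-symmetrize argument is arguably more transparent and, suitably restated, gives a correct proof for the untruncated transformation (and for the theorem in the regime $a\le b$); but as written it does not establish the theorem for $a>b$.
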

The $s$-triangle $\binom{n}{j}_s$ is a generalization of the Pascal
triangle, which is given by the ordinary multinomial coefficients
\cite{Com74}:
$$(1+x+\cdots+x^{s})^n:=\sum_{j=0}^{sn}\binom{n}{j}_sx^j.$$
In \cite{AB12,AB10}, Ahmia and Belbachir also demonstrated that the
log-convexity property and log-concavity property are preserved
under the $s$-triangle transformation. Motivated by this, we will
consider a more general triangle in the following. Let $k$ be a
positive integer. Assume that the nonnegative sequence $(b_i)_{0\leq
i\leq k}$ is log-concave. Define a more generalized triangle
$\binom{n,k}{j}$:
$$(b_0+b_1x+\cdots+b_kx^{k})^n:=\sum_{j=0}^{kn}\binom{n,k}{j}x^j.$$
It is obvious that the triangle $[\binom{n,k}{j}]_{n,j\geq0}$ has
the following recurrence relations:\begin{eqnarray}
\binom{n,k}{j}&=&\sum_{i=0}^k\binom{n-1,k}{j-i}b_i,\label{recur+1}\\
\binom{n,k}{j}&=&\sum_{i=0}^{sk}\binom{n-s,k}{i}\binom{s,k}{j-i}.\label{recur+s}
\end{eqnarray}
In fact, the triangle $[\binom{n,k}{j}]_{n,j\geq0}$ generalizes many
famous triangles. For instance,
\begin{itemize}
\item[\rm (i)]
If both $k=b_0=b_1=1$, then the triangle
$[\binom{n,k}{j}]_{n,j\geq0}$ turns out be the Pascal triangle.
\item[\rm (ii)]
If $b_0=b_1=\ldots=b_k=1$, then the triangle
$[\binom{n,k}{j}]_{n,j\geq0}$ turns out be the s-triangles given by
the ordinary multinomials, see \cite[A027907 for s = 2, A008287 for
s = 3 and A035343 for s = 4]{Sl}.
\end{itemize}
We also prove the next stronger result in a unified approach.
\begin{thm}\label{thm+three}
Let $k$ be any fixed positive integer and $\binom{n,k}{j}$ be as
above. Then the linear transformation
$$y_n=\sum_{i=0}^{kn}\binom{n,k}{j}x_i$$ not only preserves the log-concavity property, but also preserves the log-convexity property.
\end{thm}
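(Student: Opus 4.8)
The plan is to fix $n\ge 1$, study $\Delta_n:=y_{n-1}y_{n+1}-y_n^2$, and show that $\Delta_n\ge 0$ when $(x_j)$ is log-convex and $\Delta_n\le 0$ when $(x_j)$ is log-concave; a short direct examination of the supports (the set of $n$ with $y_n\ne0$ is an interval) shows that $(y_n)$ has no internal zeros, and the degenerate case $(x_j)=(x_0,0,0,\dots)$, where $y_n=b_0^nx_0$, is trivial, so we may assume $(x_j)$ is strictly positive throughout (log-convex case) or on its support interval (log-concave case). Write $B(x)=b_0+b_1x+\cdots+b_kx^k$, so that $\binom{n,k}{j}=[x^j]B(x)^n$. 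Expanding $\Delta_n=\sum_{a,b\ge0}\bigl(\binom{n-1,k}{a}\binom{n+1,k}{b}-\binom{n,k}{a}\binom{n,k}{b}\bigr)x_ax_b$ and symmetrizing in $(a,b)$ gives $\Delta_n=\tfrac12\sum_{a,b\ge0}\gamma_{a,b}\,x_ax_b$, where
\[
\sum_{a,b\ge0}\gamma_{a,b}x^ay^b=B(x)^{n-1}B(y)^{n+1}+B(x)^{n+1}B(y)^{n-1}-2B(x)^nB(y)^n=B(x)^{n-1}B(y)^{n-1}\bigl(B(x)-B(y)\bigr)^2 .
\]

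Since $b_i\ge0$, we have $B(x)-B(y)=(x-y)H(x,y)$ with $H(x,y)=\sum_{i\ge1}b_i\bigl(x^{i-1}+x^{i-2}y+\cdots+y^{i-1}\bigr)$ a polynomial with nonnegative coefficients; hence $\sum\gamma_{a,b}x^ay^b=(x-y)^2K(x,y)$ where $K(x,y):=B(x)^{n-1}B(y)^{n-1}H(x,y)^2$ has nonnegative coefficients $K_{c,d}$, and therefore $\gamma_{a,b}=K_{a-2,b}-2K_{a-1,b-1}+K_{a,b-2}$. Next I would split $\Delta_n$ according to the antidiagonal $s=a+b$ and apply Abel summation to each inner sum. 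Along the $s$-antidiagonal, $(\gamma_{\ell,s-\ell})_\ell$ is the discrete second difference in $\ell$ of the sequence $\kappa^{(s-2)}_m:=K_{m,\,s-2-m}$; since $\sum_{a+b=s}\gamma_{a,b}=0$ (set $x=y$ in the identity above), the relevant partial sums telescope and one obtains
\[
\Delta_n=\sum_{s\ge2}\ \sum_{0\le i<\lrf{s/2}}\bigl(\kappa^{(s-2)}_i-\kappa^{(s-2)}_{i-1}\bigr)\bigl(x_ix_{s-i}-x_{i+1}x_{s-i-1}\bigr),\qquad \kappa^{(s-2)}_{-1}:=0 .
\]
The theorem now reduces to two sign facts: (i) each antidiagonal $(\kappa^{(\tau)}_m)_m$ of $K$ is nondecreasing up to its centre, so that $\kappa^{(s-2)}_i-\kappa^{(s-2)}_{i-1}\ge0$ in the displayed range; and (ii) $x_ix_{s-i}-x_{i+1}x_{s-i-1}$ is $\ge0$ for log-convex $(x_j)$ and $\le0$ for log-concave $(x_j)$, which is immediate from the monotonicity of $j\mapsto x_j/x_{j+1}$ (the products along an antidiagonal grow, resp.\ shrink, as the pair spreads apart). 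Granting (i) and (ii), each summand in the display is $\ge0$ when $(x_j)$ is log-convex and $\le0$ when $(x_j)$ is log-concave, which is exactly the assertion.

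It remains to prove (i), which I regard as the technical heart. Antidiagonals of a product of two bivariate polynomials are sums of convolutions of antidiagonals of the factors (Cauchy product), so it suffices to analyse the two factors of $K=\bigl(B(x)^{n-1}B(y)^{n-1}\bigr)\cdot H(x,y)^2$. The $\sigma$-antidiagonal of $B(x)^{n-1}B(y)^{n-1}$ is $\bigl(\binom{n-1,k}{a}\binom{n-1,k}{\sigma-a}\bigr)_a$; since the coefficient sequence of a power of a polynomial with log-concave nonnegative coefficients and no internal zeros is again log-concave with no internal zeros (such sequences are closed under convolution — this is the one place log-concavity of $(b_i)$, and not merely nonnegativity, is needed), this antidiagonal is symmetric about $\sigma/2$ and log-concave, hence symmetric unimodal. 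The $\tau$-antidiagonal of $H(x,y)^2$ is a nonnegative linear combination of convolutions of two constant blocks, i.e.\ of symmetric trapezoids centred at $\tau/2$, hence also symmetric unimodal. Finally, a convolution of two symmetric unimodal sequences is symmetric unimodal — write each as a nonnegative ``level-set'' combination of indicators of symmetric intervals, whose pairwise convolutions are symmetric trapezoids — and a sum of symmetric unimodal sequences with a common centre is symmetric unimodal. Hence every antidiagonal of $K$ is symmetric unimodal, which gives (i) and finishes the proof. I note that the log-convex half alone has a shorter route: from $\Delta_n=\sum_{c,d\ge0}K_{c,d}\bigl(x_cx_{d+2}-2x_{c+1}x_{d+1}+x_{c+2}x_d\bigr)$ one only needs that for positive log-convex $(x_j)$ the three products occurring in each bracket, listed as $P\ge Q\ge R$, satisfy $PR\ge Q^2$ (add the two second-difference inequalities for the convex sequence $\log x_j$), whence $P+R\ge2\sqrt{PR}\ge2Q$ by AM--GM; the log-concave half genuinely needs the Abel-summation reorganization above, since there those brackets need not be nonpositive.
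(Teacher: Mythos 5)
Your argument is correct, but it takes a genuinely different route from the paper's. The paper exploits the recurrence $\binom{n,k}{j}=\sum_i\binom{n-1,k}{j-i}b_i$ to factor the transformation as an iterated one-step convolution $x\mapsto\bigl(\sum_ib_ix_{\cdot+i}\bigr)$, proves that this single step preserves log-concavity by a Cauchy--Binet argument (Fact \ref{q+sum}: the Toeplitz matrix of a log-concave $(b_i)$ is TP$_2$ and the Hankel matrix of a log-concave $(x_n)$ has nonpositive $2\times2$ minors), and then inducts on $n$; the log-convexity half is declared ``similar'' (it is: $z_{n-1}z_{n+1}\geq z_n^2$ for $z_n=\sum_ib_ix_{n+i}$ follows from Cauchy--Schwarz and needs no hypothesis on $(b_i)$). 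You instead produce a closed-form signed decomposition of $y_{n-1}y_{n+1}-y_n^2$ in one shot, via the identity $B(x)^{n-1}B(y)^{n+1}+B(x)^{n+1}B(y)^{n-1}-2B(x)^nB(y)^n=B(x)^{n-1}B(y)^{n-1}\bigl(B(x)-B(y)\bigr)^2$, extraction of $(x-y)^2$, and double Abel summation along antidiagonals. I checked the bookkeeping --- the cancellation of the factor $\tfrac12$ against the symmetry $m\mapsto s-1-m$, the range $i<\lrf{s/2}$, and the sign facts (i) and (ii) --- and it all holds; the symmetric unimodality of the antidiagonals of $K$ is correctly reduced to the classical closure of symmetric unimodal nonnegative sequences under convolution together with the log-concavity of the coefficients of $B^{n-1}$ (which is the same convolution fact underlying the paper's Fact \ref{q+sum}). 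What your approach buys: both halves of the theorem fall out of one formula with the signs manifest, the exact role of the hypothesis on $(b_i)$ is isolated, and you are more careful than the paper about internal zeros and degenerate supports. What it costs is length --- the paper's induction is a few lines once the convolution lemma is in place, whereas your claim (i) requires the full symmetric-unimodal machinery; your closing observation that the log-convex half alone admits a two-line AM--GM proof from $\Delta_n=\tfrac12\sum_{c,d}K_{c,d}(x_cx_{d+2}-2x_{c+1}x_{d+1}+x_{c+2}x_d)$ is also correct.
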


In \cite{Zhu13}, Zhu defined a triangular array as follows. Let
$[A_{n,k}(q)]_{n, k\geq 0}$ be an infinite lower triangular array
defined by the recurrence
\begin{eqnarray}\label{rre}
A_{n,k}(q)=f_{k}(q)\,A_{n-1,k-1}(q)+g_{k}(q)\,A_{n-1,k}(q)+h_{k}(q)\,A_{n-1,k+1}(q)
\end{eqnarray}
for $n\geq 1$ and $k\geq 0$, where $A_{0,0}(q)=1$,
$A_{0,k}(q)=A_{0,-1}(q)=0$ for $k>0$. This triangular array consists
of polynomials in $q$ and it will turn out to be the array with
entries be real numbers for any fixed $q\geq0$. In fact, this
triangular array (\ref{rre}) unifies many well-known combinatorial
triangles. The following are some basic examples.

\begin{ex}\label{exm}
(1) The Catalan triangle of Aigner \cite{Aig991} is
$$C=[C_{n,k}]_{n,k\geq0}=\left[
      \begin{array}{rrrrr}
        1 &  &  &  &  \\
        1 & 1 &   &   &\\
        2 & 3 & 1 &   & \\
        5 & 9 & 5 & 1 &   \\
       \vdots &  &  &  & \ddots \\
      \end{array}
    \right],$$
where $C_{n+1,k}=C_{n,k-1}+2\,C_{n,k}+C_{n,k+1}$ and
$C_{n+1,0}=C_{n,0}+C_{n,1}$. The numbers in the $0$th column are the Catalan numbers $C_n$. \\
(2) The Catalan triangle of Shaprio \cite{Sha76} is
$$C'=[C'_{n,k}]_{n,k\geq0}=\left[
      \begin{array}{rrrrr}
        1 &  &  &  &  \\
        2 & 1 &   &   &\\
        5 & 4 & 1 &   & \\
        14 & 14 & 6 & 1 &   \\
       \vdots &  &  &  & \ddots \\
      \end{array}
    \right],$$
where $C'_{n+1,k}=C'_{n,k-1}+2\,C'_{n,k}+C'_{n,k+1}$ for $k\geq0$. The numbers in the $0$th column are the Catalan numbers $C_n$. \\
(3) The Motzkin triangle \cite{Aig98,Aig991} is
  $$M=[M_{n,k}]_{n,k\geq0}=\left[
  \begin{array}{rrrrr}
                   1 &  &  &  &  \\
                   1 & 1 &  &  &  \\
                   2 & 2 & 1 &  &  \\
                   4 & 5 & 3 & 1 &  \\
                   \vdots &  &  &  & \ddots \\
                 \end{array}
  \right],$$
where $M_{n+1,k}=M_{n,k-1}+M_{n,k}+M_{n,k+1}$ and
$M_{n+1,0}=M_{n,0}+M_{n,1}$. The numbers in the $0$th column are the Motzkin numbers $M_n$.\\
(4) The large Schr\"{o}der triangle \cite{CKS12} is
 $$s=[s_{n,k}]_{n,k\geq0}=\left[
 \begin{array}{rrrrr}
 1 &  &  &  &  \\
 2 & 1 &  &  &  \\
 6 & 4 & 1 &  &  \\
 22 & 16 & 6 & 1 &  \\
 \vdots &  &  &  & \ddots \\
 \end{array}
 \right],$$ where $s_{n+1,k}=s_{n,k-1}+2\,s_{n,k}+2s_{n,k+1}$ and
$s_{n+1,0}=s_{n,0}+2s_{n,1}$. The numbers in the $0$th column are the large Schr\"{o}der numbers $S_n$. \\
(5) The Bell triangle \cite{Aig99} is
  $$B=[B_{n,k}]_{n,k\geq0}=\left[
  \begin{array}{lllll}
                   1 &  &  &  &  \\
                   1 & 1 &  &  &  \\
                   2 & 3 & 1 &  &  \\
                   5 & 10 & 6 & 1 &  \\
                   \vdots &  &  &  & \ddots \\
                 \end{array}
  \right],$$
  where $B_{n+1,k}=B_{n,k-1}+(1+k)\,B_{n,k}+(1+k)\,B_{n,k+1}$ and
$B_{n+1,0}=B_{n,0}+B_{n,1}$. The numbers in the $0$th column are the
Bell numbers.\\
(6) The Bell polynomials triangle $B=[B_{n,k}]_{n,k\geq0}$
\cite{Aig01} is
  $$\left[
  \begin{array}{lllll}
                   1 &  &  &  &  \\
                   q & 1 &  &  &  \\
                   q^2+q & 2q+1 & 1 &  &  \\
                   q^3+3q^2+q & 3q^2+6q+1 & 3q+3 & 1 &  \\
                   \vdots &  &  &  & \ddots \\
                 \end{array}
  \right],$$
  where $B_{n+1,k}=B_{n,k-1}+(q+k)\,B_{n,k}+q(1+k)\,B_{n,k+1}$. The numbers in the $0$th column are the Bell polynomials. In particular, it reduces to the Bell triangle
  \cite{Aig99}
  for $q=1$.\\
(7) The Eulerian polynomials triangle $E=[E_{n,k}]_{n,k\geq0}$
\cite{Aig01} is
  $$\left[
  \begin{array}{lllll}
                   1 &  &  &  &  \\
                   1 & 1 &  &  &  \\
                   q+1 & q+3 & 1 &  &  \\
                   q^2+4q+1 & q^2+10q+7 & 3q+6 & 1 &  \\
                    \vdots &  &  &  & \ddots \\
                 \end{array}
  \right],$$
  where $E_{n+1,k}=E_{n,k-1}+(kq+k+1)\,E_{n,k}+(k+1)^2q\,E_{n,k+1}$. The numbers in the $0$th column are
the Eulerian polynomials.\\
(8) The Narayana polynomials triangle $N=[N_{n,k}]_{n,k\geq0}$
\cite{Aig01} is
  $$\left[
  \begin{array}{lllll}
                   1 &  &  &  &  \\
                   q & 1 &  &  &  \\
                   q^2+q & 2q+1 & 1 &  &  \\
                   q^3+3q^2+q & 3q^2+5q+1 & 3q+2 & 1 &  \\
                    \vdots &  &  &  & \ddots \\
                 \end{array}
  \right],$$
  where $E_{n+1,k}=E_{n,k-1}+(q+1)\,E_{n,k}+q\,E_{n,k+1}$ and $E_{n+1,0}=q E_{n,0}+ q E_{n,1}$. The numbers in the $0$th column are
the Narayana polynomials \cite{Aig01}.
\end{ex}

If $f_k\equiv1$ for $k\geq0$, then it can be reduced to the
recursive matrix and $A_{n,0}(q)$ are called the Catalan-like
numbers, see Aigner~\cite{Aig991,Aig01}. Many combinatorial and
algebraic properties of the triangular array $[A_{n,k}(q)]_{n, k\geq
0}$ have been found. For instance, Aigner
\cite{Aig98,Aig991,Aig01,Aig08} researched various combinatorial
properties of recursive matrices and Hankel matrices of the
Catalan-like numbers. Chen, Liang and Wang \cite{CLW15} considered
the total positivity of recursive matrices. Zhu \cite{Zhu13} gave a
criterion for the strong $q$-log-convexity of the first column of
$[A_{n,k}(q)]_{n,k\geq0}$. However, there is no result for the
strong $q$-log-concavity of the sequence in each row of
$[A_{n,k}(q)]_{n,k\geq0}$. This is our another motivation. In \S4,
we will present some criterions for the strong $q$-log-concavity of
the sequence in each row of $[A_{n,k}(q)]_{n, k\geq0}$.

The remainder part of this paper is arranged as follows. In \S2 and
\S3, we give the proofs of Theorem \ref{thm+binom} and
\ref{thm+three}, respectively. In \S4 we present sufficient
conditions for the strong $q$-log-concavity of rows of the certain
triangle. As applications, we show the log-concavity and the strong
$q$-log-concavity of sequences of rows in many well-known triangular
arrays, such as the Catalan triangles of Aigner and Shaprio, the
large Schr\"oder triangle, the Motzkin triangle, the Bell
polynomials triangle, the Eulerian polynomials triangle and the
Narayana polynomials triangle, and so on, in a unified approach.

\section{Proof of Theorem \ref{thm+binom}}
\begin{proof}
Since a nonnegative sequence $(a_n)_{n\geq0}$ with no internal zeros
is log-concave if and only if $a_ia_j\geq a_{i+1}a_{j-1}$ for $i\geq
j$, the sequence sequence $(a_n)_{n\geq0}$ is strongly
$q$-log-concave. Thus, the second part immediately follows from the
first part for $q=0$. So we only need to show the first part, {\it
i.e.} the linear transformation
$$y_n(q)=\sum_{k=0}^n\binom{a+n}{b+k}x_k(q)$$ preserves the strong $q$-log-concavity
property. We first need the next fact.
\begin{f}\label{sum}
If $(x_n(q))_{n\geq0}$ is strongly $q$-log-concave, then so is
$(x_n(q)+x_{n+1}(q))_{n\geq0}$.
\end{f}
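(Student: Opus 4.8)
The plan is to verify the defining inequality directly from the hypothesis, using only a little index bookkeeping. Abbreviate $x_n:=x_n(q)$ and $y_n:=y_n(q)=x_n+x_{n+1}$, with the convention $x_t:=0$ for $t<0$. The one consequence of strong $q$-log-concavity I will use repeatedly is obtained by iterating the defining relation $x_ix_j\geq_q x_{i-1}x_{j+1}$ (valid for $i\le j$): it gives
$$x_ix_j\geq_q x_kx_\ell\qquad\text{whenever}\qquad i+j=k+\ell\ \text{ and }\ [\min(i,j),\max(i,j)]\subseteq[\min(k,\ell),\max(k,\ell)],$$
that is, among two products with a common index-sum the ``more central'' one dominates coefficientwise; along the iteration the smaller index never drops below $\min(k,\ell)$, so no negative index is created.

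Next I expand the difference to be shown nonnegative. It suffices to prove $y_ry_s\geq_q y_{r-1}y_{s+1}$ for $1\le r\le s$ (the case $r=0$ being vacuous under the convention $y_{-1}:=0$), which is the strong $q$-log-concavity condition for $(y_n)_{n\ge0}$. Expanding,
$$y_ry_s=x_rx_s+x_rx_{s+1}+x_{r+1}x_s+x_{r+1}x_{s+1},\qquad y_{r-1}y_{s+1}=x_{r-1}x_{s+1}+x_{r-1}x_{s+2}+x_rx_{s+1}+x_rx_{s+2},$$
the common summand $x_rx_{s+1}$ cancels, and sorting the remaining six terms by index-sum yields
$$y_ry_s-y_{r-1}y_{s+1}=\big(x_rx_s-x_{r-1}x_{s+1}\big)+\big(x_{r+1}x_s-x_{r-1}x_{s+2}\big)+\big(x_{r+1}x_{s+1}-x_rx_{s+2}\big).$$

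Finally I check that each of the three bracketed differences is $\geq_q0$ by the principle above: in each bracket the two products share an index-sum and the first one carries the nested (more central) index set, using $r\le s$ throughout. Summing the three inequalities gives $y_ry_s\geq_q y_{r-1}y_{s+1}$, as desired. There is no genuine obstacle here; the only things one must notice — and the reason the argument comes out so cleanly — are that the cross term $x_rx_{s+1}$ cancels, so that what remains breaks up precisely into three pairs each comparing a central product with a less central one, and that the nesting in the middle bracket still holds at the boundary $r=s$, where it reads $[s,s+1]\subseteq[s-1,s+2]$.
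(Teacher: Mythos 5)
Your proof is correct and follows essentially the same route as the paper's: expand $y_ry_s-y_{r-1}y_{s+1}$, cancel the common cross term $x_rx_{s+1}$, and split the remaining six terms into three differences, each nonnegative by (iterated) strong $q$-log-concavity. Your explicit "centrality" principle and the boundary check at $r=s$ are just careful spellings-out of what the paper leaves implicit.
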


\begin{proof}
Since the sequence $(x_n(q))_{n\geq0}$ is strongly $q$-log-concave,
by the definition, we have $x_i(q)x_j(q)\geq_q x_{i-1}(q)x_{j+1}(q)$
for any $j\geq i\geq0$, which implies that
\begin{eqnarray*}
&&\left[x_{j-1}(q)+x_{j-2}(q)\right]\left[x_{i+1}(q)+x_{i}(q)\right]-\left[x_j(q)+x_{j-1}(q)\right]\left[x_i(q)+x_{i-1}(q)\right]\\
&=&\left[x_{j-1}(q)x_{i+1}(q)-x_j(q)x_i(q)\right]+\left[x_{j-2}(q)x_{i}(q)-x_{j-1}(q)x_{i-1}(q)\right]\\
&&+\left[x_{j-2}(q)x_{i+1}(q)-x_{j}(q)x_{i-1}(q)\right]\\
&\leq_q&0
\end{eqnarray*}
for any $i\geq j\geq0,$ as desired. This proves the fact.
\end{proof}
In the following, we will prove the desired result by induction on
$n$.

If $0\leq n \leq 3$, then we have
\begin{eqnarray}
y_0 (q)&=&x_0(q)\binom{a}{b},\label{0}\\
y_1(q)&=& x_0(q)\binom{a+1}{b} + x_1(q)\binom{a+1}{b+1},\label{1}\\
y_2(q)& =& x_0(q) \binom{a+2}{b}+ x_1(q)\binom{a+2}{b+1}+
x_2(q)\binom{a+2}{b+2},\label{2}\\
y_3(q)& =& x_0(q) \binom{a+3}{b}+ x_1(q)\binom{a+3}{b+1}+
x_2(q)\binom{a+3}{b+2}+x_3(q)\binom{a+3}{b+3},\label{3}
\end{eqnarray}
since $y_n(q)=\sum_{k=0}^n\binom{n+a}{k+b}x_k(q)$. Hence, by
(\ref{0})-(\ref{2}) we obtain that
\begin{eqnarray}\label{eq+}
y_1^2(q)-y_2(q)y_0(q)
&=&x_0^2(q)\left[\binom{a+1}{b}^2-\binom{a}{b}\binom{a+2}{b}\right]\nonumber\\
&&+\left[x_1^2(q)\binom{a+1}{b+1}^2-x_0(q)x_2(q)\binom{a}{b}\binom{a+2}{b+2}\right]\nonumber\\
&&+x_0(q)x_1(q)\left[2\binom{a+1}{b}\binom{a+1}{b+1}-\binom{a}{b}\binom{a+2}{b+1}\right].
\end{eqnarray}
Thus we deduce for $a<b$ that $$y_1^2(q)-y_2(q)y_0(q)\geq_q 0.$$ On
the other hand, by (\ref{eq+}) we also have
$$y_1^2(q)-y_2(q)y_0(q)\geq_q0$$ for $a\geq b$ since
\begin{eqnarray*}
&&\binom{a+1}{b}^2-\binom{a}{b}\binom{a+2}{b}=\binom{a+1}{b-1}\binom{a}{b},\\
&&x_1^2(q)\binom{a+1}{b+1}^2-x_0(q)x_2(q)\binom{a}{b}\binom{a+2}{b+2}\\
&&\geq_q
x_1^2(q)\left[\binom{a+1}{b+1}^2-\binom{a}{b}\binom{a+2}{b+2}\right]=\frac{x_1^2(q)}{b+1}\binom{a+1}{b+2}\binom{a}{b},\\
&&2\binom{a+1}{b}\binom{a+1}{b+1}-\binom{a}{b}\binom{a+2}{b+1}=\frac{a}{a-b+1}\binom{a+1}{b+1}\binom{a}{b}.
\end{eqnarray*}
In the following, we will prove $y_1(q)y_2(q)-y_3(q)y_0(q)\geq_q 0$.
Note for $a\leq b$ that $y_0(q)=0$. So it suffices to consider the
remaining case $a\geq b$. It follows from (\ref{0})-(\ref{3}) we
have
\begin{eqnarray*}
&&y_1(q)y_2(q)-y_3(q)y_0(q)\nonumber\\
&=&\left[x_0(q)\binom{a+1}{b} +
x_1(q)\binom{a+1}{b+1}\right]\left[x_0(q) \binom{a+2}{b}+
x_1(q)\binom{a+2}{b+1}+
x_2(q)\binom{a+2}{b+2}\right]\nonumber\\
&&-x_0(q)\binom{a}{b}\left[ x_0(q) \binom{a+3}{b}+
x_1(q)\binom{a+3}{b+1}+
x_2(q)\binom{a+3}{b+2}+x_3(q)\binom{a+3}{b+3}\right]\nonumber\\
&=&x^2_0(q)\left[\binom{a+1}{b}\binom{a+2}{b}-\binom{a}{b}\binom{a+3}{b}\right]+x_1^2(q)\binom{a+1}{b+1}\binom{a+2}{b+1}\nonumber\\
&&+x_0(q)x_1(q)\left[\binom{a+1}{b}\binom{a+2}{b+1}+\binom{a+1}{b+1}\binom{a+2}{b}-\binom{a}{b}\binom{a+3}{b+1}\right]\nonumber\\
&&+x_0(q)x_2(q)\left[\binom{a+1}{b}\binom{a+2}{b+2}-\binom{a}{b}\binom{a+3}{b+2}\right]\nonumber\\
&&+x_1(q)x_2(q)\binom{a+1}{b+1}\binom{a+2}{b+2}-x_0(q)x_3(q)\binom{a}{b}\binom{a+3}{b+3}\nonumber\\
&\geq_q&x^2_0(q)\left[\binom{a+1}{b}\binom{a+2}{b}-\binom{a}{b}\binom{a+3}{b}\right]\nonumber\\
&&+x_0(q)x_1(q)\left[\binom{a+1}{b}\binom{a+2}{b+1}+\binom{a+1}{b+1}\binom{a+2}{b}-\binom{a}{b}\binom{a+3}{b+1}\right]\nonumber\\
&&+x_0(q)x_2(q)\left[\binom{a+1}{b+1}\binom{a+2}{b+1}+\binom{a+1}{b}\binom{a+2}{b+2}-\binom{a}{b}\binom{a+3}{b+2}\right]\nonumber\\
&&+x_1(q)x_2(q)\left[\binom{a+1}{b+1}\binom{a+2}{b+2}-\binom{a}{b}\binom{a+3}{b+3}\right]\\
&\geq_q&0
\end{eqnarray*}
because $(x_n(q))_{n\geq0}$ is strongly $q$-log-concave and the
following equalities
\begin{eqnarray*}
&&\binom{a+1}{b}\binom{a+2}{b}-\binom{a}{b}\binom{a+3}{b}=\frac{2b}{(a+1-b)(a+3-b)}\binom{a}{b}\binom{a+2}{b},\\
&&\binom{a+1}{b}\binom{a+2}{b+1}+\binom{a+1}{b+1}\binom{a+2}{b}-\binom{a}{b}\binom{a+3}{b+1}=\frac{a(a-b)+a+b}{(a+1-b)(a+2-b)}\binom{a}{b}\binom{a+2}{b+1},\\
&&\binom{a+1}{b+1}\binom{a+2}{b+1}+\binom{a+1}{b}\binom{a+2}{b+2}-\binom{a}{b}\binom{a+3}{b+2}=\frac{ab+2a-b}{(1+b)(2+b)}\binom{a}{b}\binom{a+2}{b+1},\\
&&\binom{a+1}{b+1}\binom{a+2}{b+2}-\binom{a}{b}\binom{a+3}{b+3}=\frac{2(a-b)}{(b+1)(b+3)}\binom{a}{b}\binom{a+2}{b+2}.
\end{eqnarray*}
 Thus, we
obtain that $y_0(q),y_1(q),y_2(q),y_3(q)$ is strongly
$q$-log-concave. So we proceed to the inductive step ($n\geq 4$).

Note that
\begin{eqnarray*}
y_n(q)&=&\sum_{k=0}^n\binom{a+n}{b+k}x_k(q)\\
&=&\sum_{k=0}^{n-1}\binom{a+n-1}{b+k}[x_k(q)+x_{k+1}(q)].
\end{eqnarray*}
Thus by the induction hypothesis and the strong $q$-log-concavity of
$(x_k(q)+x_{k+1}(q))_{k\ge 0}$ by Fact \ref{sum}, we have
$y_0(q),y_1(q),y_2(q),\ldots, y_n(q)$ is strongly $q$-log-concave.
This completes the proof.
\end{proof}
\begin{rem}
If we only consider the log-concavity, we don't need to prove
$y_1y_2-y_0y_3\geq0$ in induction base. Thus, it is obvious that our
proof is much simpler.
\end{rem}

\section{Proof of Theorem \ref{thm+three}}
\begin{proof}
The proof for the second part is similar to that of the first.
Therefore, for brevity, we only show that the linear transformation
$$y_n=\sum_{i=0}^{kn}\binom{n,k}{j}x_i$$ preserves the log-concavity
property.
We first prove the next fact.
\begin{f}\label{q+sum}
If $(x_n)_{n\geq0}$ is log-concave, then so is
$(\sum_{i=0}^kb_ix_{n+i})_{n\geq0}$.
\end{f}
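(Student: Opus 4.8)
The plan is to reduce the Fact to the single inequality $y_n^2\ge y_{n-1}y_{n+1}$ for the sequence $y_n:=\sum_{i=0}^{k}b_ix_{n+i}$ (for $n\ge 1$): nonnegativity of $(y_n)$ is clear, and it has no internal zeros since its support is an interval (all summands being nonnegative). Put $b_i:=0$ for $i\notin\{0,\dots,k\}$ and expand the two products as sums over ordered pairs; collecting the coefficient of $x_{n+p}x_{n+q}$ on both sides gives
\begin{equation*}
y_n^2-y_{n-1}y_{n+1}=\sum_{p,q\in\mathbb{Z}}\bigl(b_pb_q-b_{p+1}b_{q-1}\bigr)\,x_{n+p}x_{n+q},
\end{equation*}
the weight $b_{p+1}b_{q-1}$ arising because in $y_{n-1}y_{n+1}$ the factor $x_{n+p}$ must be supplied by $y_{n-1}$ (index $i=p+1$) and $x_{n+q}$ by $y_{n+1}$ (index $i=q-1$). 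Individual summands can be negative, so the heart of the proof is a symmetrization.

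Since $(p,q)\mapsto(q-1,p+1)$ is an involution of $\mathbb{Z}^2$, reindexing the last sum by it rewrites the right-hand side as $-\sum_{p,q}\bigl(b_pb_q-b_{p+1}b_{q-1}\bigr)x_{n+p+1}x_{n+q-1}$; adding the two forms and halving yields
\begin{equation*}
2\bigl(y_n^2-y_{n-1}y_{n+1}\bigr)=\sum_{p,q\in\mathbb{Z}}\bigl(b_pb_q-b_{p+1}b_{q-1}\bigr)\bigl(x_{n+p}x_{n+q}-x_{n+p+1}x_{n+q-1}\bigr).
\end{equation*}
It now suffices to show every summand is $\ge 0$, i.e. that the two factors $b_pb_q-b_{p+1}b_{q-1}$ and $x_{n+p}x_{n+q}-x_{n+p+1}x_{n+q-1}$ always have the same sign. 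For this I would record a sign rule valid for any nonnegative log-concave sequence $(u_i)$ with no internal zeros: for all integers $p,q$, the quantity $u_pu_q-u_{p+1}u_{q-1}$ is $\ge 0$ if $q\le p$, equals $0$ if $q=p+1$, and is $\le 0$ if $q\ge p+2$. Both nontrivial cases are immediate from the characterization recalled in the proof of Theorem~\ref{thm+binom} (log-concavity with no internal zeros is equivalent to $u_iu_j\ge u_{i+1}u_{j-1}$ whenever $i\ge j$): apply it with $(i,j)=(p,q)$ when $q\le p$, and with $(i,j)=(q-1,p+1)$ when $q\ge p+2$.

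Applying this rule simultaneously to $(b_i)$ and to the sequence $i\mapsto x_{n+i}$ (which is log-concave with no internal zeros because $(x_n)$ is) shows that, for each $p,q$, the two factors above fall under the same clause of the sign rule according to whether $q\le p$, $q=p+1$, or $q\ge p+2$; hence they share a sign, every summand is $\ge 0$, and $y_n^2\ge y_{n-1}y_{n+1}$, which proves the Fact. The single genuine idea is the symmetrizing substitution $(p,q)\mapsto(q-1,p+1)$, which turns a sum with indefinite-sign terms into a manifestly nonnegative one; I do not anticipate a real obstacle beyond routine bookkeeping, namely checking that padding $(b_i)$ by zeros creates no spurious terms and that every index appearing (such as $n+p+1$ or $n+q-1$) is nonnegative when $n\ge 1$, both clear since the nonzero terms have $p,q\in\{-1,0,\dots,k\}$. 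Alternatively one could invoke the classical fact that the convolution of two nonnegative log-concave sequences without internal zeros is again such a sequence, but the argument above is self-contained and consistent with the paper's other proofs.
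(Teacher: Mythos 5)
Your proof is correct, but it takes a genuinely different route from the paper. The paper's proof is matrix-theoretic: it encodes the transformation as the product $Z=BX$ of the Toeplitz matrix $B=[b_{j-i}]$ (which is TP$_2$ because $(b_i)$ is log-concave) with the Hankel matrix $X=[x_{i+j}]$ (all of whose $2\times 2$ minors are nonpositive because $(x_n)$ is log-concave), and then invokes the Cauchy--Binet formula to conclude that every $2\times 2$ minor of the Hankel matrix $Z=[z_{i+j}]$ is nonpositive. Your argument instead expands $y_n^2-y_{n-1}y_{n+1}$ directly, symmetrizes via the involution $(p,q)\mapsto(q-1,p+1)$, and pairs the signs of $b_pb_q-b_{p+1}b_{q-1}$ with those of $x_{n+p}x_{n+q}-x_{n+p+1}x_{n+q-1}$; this is essentially the classical elementary proof that correlations/convolutions of log-concave sequences are log-concave. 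Your computation checks out (the reindexing of $y_{n-1}y_{n+1}$ to weight $b_{p+1}b_{q-1}$, the sign trichotomy in $q-p$, and the fact that zero-padding $(b_i)$ preserves the inequality $b_ib_j\ge b_{i+1}b_{j-1}$ for $i\ge j$ are all sound; the only tiny slip is that the nonzero terms actually have $q$ ranging up to $k+1$ rather than $k$, which is harmless since all $x$-indices remain nonnegative for $n\ge1$). What each approach buys: the paper's is shorter once one accepts Cauchy--Binet and TP$_2$ machinery, and it is the version that generalizes to higher-order total positivity; yours is self-contained, avoids any appeal to determinantal identities, and makes transparent exactly which cross-terms cancel --- and, as you note, it would adapt with no change to the $q$-analogue (coefficientwise inequalities), which the matrix argument would need the notion of $q$-total positivity to handle.
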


\begin{proof}
Assume that $z_n= \sum_{i=0}^kb_ix_{n+i}$. Let
$$B=[b_{j-i}]_{i,j\geq0}=\left[\begin{array}{ccccccc}
b_0&b_{1}&b_{2}&\ldots&b_{k}&0&\ldots\\
0&b_0&b_{1}&\ldots&b_{k-1}&b_{k}&\ldots\\
0&0&b_0&\ldots&b_{k-2}&b_{k-1}&\ldots\\
\vdots&\vdots&\vdots&\vdots&\vdots&\vdots&\ldots
\end{array}\right]$$
and $$X=[x_{j+i}]_{i,j\geq0}=\left[\begin{array}{ccccccc}
x_0&x_{1}&x_{2}&x_{3}&x_{4}&x_5&\ldots\\
x_{1}&x_{2}&x_{3}&x_{4}&x_5&\ldots&\ldots\\
x_{2}&x_{3}&x_{4}&x_5&\ldots&\ldots&\ldots\\
x_{3}&x_{4}&x_5&\ldots&\ldots&\ldots&\ldots\\
x_{4}&x_5&\ldots&\ldots&\ldots&\ldots&\ldots\\
\vdots&\vdots&\vdots&\vdots&\vdots&\vdots&\ldots
\end{array}\right].$$
Let $Z=[z_{j+i}]_{i,j\geq0}$. It is clear that $Z=BX$. Note that $B$
is TP$_2$ since $(b_i)_{0\leq i\leq k}$ is log-concave and every
minor of order $2$ of $X$ is nonpositive because $(x_n)_{n\geq0}$ is
log-concave. So the classical Cauchy-Binet Theorem shows that every
minor of order $2$ of the product $Z$ of $B$ and $X$ is nonpositive.
Hence $(z_n)_{n\geq0}$ is log-concave. This completes the proof.
\end{proof}

In the following, we will prove that the linear transformation
$$y_n=\sum_{i=0}^{kn}\binom{n,k}{i}x_i$$ preserves the log-concavity
property by induction on $n$.

If $0\leq n \leq 2$, then by $y_n=\sum_{i=0}^{kn}\binom{n,k}{i}x_i$,
we obtain that
\begin{eqnarray}
y_1^2-y_2y_0
&=&\left[\sum_{i=0}^k\binom{1,k}{i}x_i\right]^2-x_0\sum_{i=0}^{2k}\binom{2,k}{i}x_i\nonumber\\
&=&\sum_{i=0}^{2k}\left[\sum_{j=0}^i\binom{1,k}{i-j}\binom{1,k}{j}x_jx_{i-j}\right]-x_0\sum_{i=0}^{2k}\sum_{j=0}^i\binom{1,k}{i-j}\binom{1,k}{j}x_i\nonumber\\
&\geq&\sum_{i=0}^{2k}\left[\sum_{j=0}^i\binom{1,k}{i-j}\binom{1,k}{j}x_0x_{i}\right]-\sum_{i=0}^{2k}\sum_{j=0}^i\binom{1,k}{i-j}\binom{1,k}{j}x_ix_0\nonumber\\
&\geq&0
\end{eqnarray}
because $x_jx_{i-j}\geq x_0x_i$ from the log-concavity of
$(x_n)_{n\geq0}$.

 Thus, we get that $y_0,y_1,y_2$ is log-concave. So we proceed to the
inductive step ($n\geq 3$).

Note that
\begin{eqnarray*}
y_n&=&\sum_{i=0}^{kn}\binom{n,k}{i}x_i\\
&=&\sum_{i=0}^{kn}\binom{n-1,k}{i}\sum_{j=0}^kb_jx_{i+j}.
\end{eqnarray*}
Thus by the induction hypothesis and the log-concavity of
$(\sum_{j=0}^kb_jx_{i+j})_{i\ge 0}$ by means of Fact \ref{q+sum}, we
have the sequence $(y_n)_{n\geq0}$ is log-concave. This completes
the proof.
\end{proof}
\section{Strong $q$-log-concavity of rows in combinatorial triangles}
In this section, we will give some sufficient conditions for the
strong $q$-log-concavity of rows in certain triangular arrays.
\begin{thm}\label{thm+q+log-concave}
 Assume that the infinite lower triangular array
$[A_{n,k}(q)]_{n,k\geq 0}$ satisfies the recurrence
\begin{eqnarray}\label{re}
A_{n,k}(q)=f_{n,k}(q)\,A_{n-1,k-1}(q)+g_{n,k}(q)\,A_{n-1,k}(q)+h_{n,k}(q)\,A_{n-1,k+1}(q)
\end{eqnarray}
for $n\geq 1$ and $k\geq 0$, where $A_{0,0}(q)=1$,
$A_{0,k}(q)=A_{0,-1}(q)=0$ for $k>0$. Assume that three sequences of
polynomials with nonnegative coefficients $(f_{n,k}(q))_{k\geq0}$,
$(g_{n,k}(q))_{k\geq0}$ and $(h_{n,k}(q))_{k\geq0}$ are strongly
$q$-log-concave in $k$, respectively. If
\begin{eqnarray*}
&&f_{n,l}(q)g_{n,k}(q)+f_{n,k}(q)g_{n,l}(q)\geq_q f_{n,l+1}(q)g_{n,k-1}(q)+f_{n,k-1}(q)g_{n,l+1}(q),\\
&&g_{n,l}(q)h_{n,k}(q)+g_{n,k}(q)h_{n,l}(q)\geq_q g_{n,l+1}(q)h_{n,k-1}(q)+g_{n,k-1}(q)h_{n,l+1}(q),\\
&&f_{n,l}(q)h_{n,k}(q)+f_{n,k}(q)h_{n,l}(q)\geq_qf_{n,k-1}(q)h_{n,l+1}(q)+f_{n,l+1}(q)h_{n,k-1}(q)\\
\text{and}&&g_{n,k}(q)g_{n,l}(q)\geq_q f_{n,l+1}(q)h_{n,k-1}(q)
\end{eqnarray*}
for all $1\leq k\leq l\leq n$, then, for any fixed $n$, the row
sequence $(A_{n,k}(q))_{0\leq k\leq n}$ is strongly $q$-log-concave
in $k$. In particular, each row sequence $(A_{n,k}(q))_{0\leq k\leq
n}$ is log-concave in $k$ for any fixed $q\geq0$.
\end{thm}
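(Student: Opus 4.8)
The plan is to fix $n$ and prove that the row sequence $(A_{n,k}(q))_{0\le k\le n}$ is strongly $q$-log-concave by showing $A_{n,l+1}(q)A_{n,k-1}(q)\le_q A_{n,l}(q)A_{n,k}(q)$ for all $1\le k\le l\le n$. The natural approach is induction on $n$: the base case $n=0$ (or $n=1$) is trivial since the row has essentially one nonzero entry. For the inductive step, I would expand both products $A_{n,l}(q)A_{n,k}(q)$ and $A_{n,l+1}(q)A_{n,k-1}(q)$ using recurrence \eqref{re}, writing each $A_{n,k}(q)$ as $f_{n,k}A_{n-1,k-1}+g_{n,k}A_{n-1,k}+h_{n,k}A_{n-1,k+1}$ (suppressing the argument $q$). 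Each product then becomes a sum of nine terms indexed by pairs of shifts $(\epsilon,\delta)\in\{-1,0,+1\}^2$, and the difference $A_{n,l}A_{n,k}-A_{n,l+1}A_{n,k-1}$ is a combination of the coefficient products $f,g,h$ times products of the form $A_{n-1,k+\epsilon}A_{n-1,l+\delta}$.

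The key step is to group the nine-minus-nine terms so that each group is manifestly $\ge_q 0$. I would match each pair $A_{n-1,k-1}A_{n-1,l}$-type product appearing in the first expansion with the corresponding product in the second expansion that has the same (or a dominating) $A_{n-1,*}A_{n-1,*}$ factor after using the induction hypothesis on the $(n-1)$st row. Concretely: the induction hypothesis gives $A_{n-1,\beta}A_{n-1,\alpha}\ge_q A_{n-1,\beta+1}A_{n-1,\alpha-1}$ whenever $\alpha\le\beta$, so one can push all the "$A_{n-1}$-minors" into a common normalized form. The four hypotheses on $f,g,h$ — the three "mixed" symmetric inequalities for the pairs $(f,g)$, $(g,h)$, $(f,h)$, plus the cross inequality $g_{n,k}g_{n,l}\ge_q f_{n,l+1}h_{n,k-1}$ — are precisely what is needed so that, after this normalization, every resulting coefficient of a nonnegative product $A_{n-1,*}A_{n-1,*}$ is itself a polynomial with nonnegative coefficients. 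The strong $q$-log-concavity of each of $(f_{n,k})_k$, $(g_{n,k})_k$, $(h_{n,k})_k$ handles the three "pure" diagonal contributions (the $ff$, $gg$, $hh$ terms), and the mixed hypotheses handle the off-diagonal ones; the last, asymmetric-looking hypothesis is needed because the $g$-$g$ term in one expansion must absorb an $f$-$h$ term from the other.

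The main obstacle I anticipate is the bookkeeping: keeping track of exactly which of the nine terms in $A_{n,l}A_{n,k}$ is paired with which of the nine in $A_{n,l+1}A_{n,k-1}$, and verifying that after applying the induction hypothesis to shift indices the leftover coefficients match one of the four hypothesized inequalities without sign errors. Some care is also needed at the boundary $k=0$, where $A_{n-1,-1}=0$ kills several terms (and one must check the hypotheses are only invoked in the range $1\le k\le l\le n$ where they are assumed), and in verifying that strong $q$-log-concavity of the row — not merely $q$-log-concavity — is what the argument actually delivers, which it does because the whole scheme works uniformly in the pair $(k,l)$ with $k\le l$ rather than only for consecutive indices. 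Once the row sequence is strongly $q$-log-concave in $k$, specializing to any fixed $q\ge 0$ turns each "$\ge_q$" into a genuine numerical inequality, and since a nonnegative sequence with no internal zeros satisfying $a_ia_j\ge a_{i+1}a_{j-1}$ for $i\ge j$ is log-concave (as noted in the proof of Theorem~\ref{thm+binom}), the last sentence follows at once.
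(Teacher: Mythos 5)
Your proposal is correct and follows essentially the same route as the paper: induction on $n$, expansion of $A_{n,l}A_{n,k}-A_{n,l+1}A_{n,k-1}$ via the recurrence into the nine-by-nine products, and regrouping so that the pure $ff$, $gg$, $hh$ differences are handled by the strong $q$-log-concavity of the coefficient sequences together with the induction hypothesis on row $n-1$, while the mixed $fg$, $gh$, $fh$ terms are absorbed using the three symmetric hypotheses and the $gg$ term absorbs the leftover $fh$ contribution via the fourth hypothesis $g_{n,k}g_{n,l}\geq_q f_{n,l+1}h_{n,k-1}$ --- exactly the paper's grouping. The only difference is that you leave the explicit bookkeeping as an anticipated obstacle rather than carrying it out, but the pairing you describe is the one that works.
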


\begin{proof}
In order to prove that $(A_{n,k}(q))_{0\leq k\leq n}$ is strongly
$q$-log-concave in $k$, we only need to prove
$$A_{n,l}(q)A_{n,k}(q)- A_{n,l+1}(q)A_{n,k-1}(q)\geq_q0$$ for any $l\geq k$, which will be done by induction
on $n$. It is obvious for $n=0$. So, we assume that it follows for
$n\leq m-1$.  For brevity, we write $f_k$ (resp. $g_k$, $h_k$) for
$f_{n,k}(q)$ (resp. $g_{n,k}(q)$, $h_{n,k}(q)$). Then for $n=m$ and
$1\leq k\leq l\leq m$, by (\ref{re}), we have
\begin{eqnarray}\label{eq+main}
&&A_{m,l}(q)A_{m,k}(q)- A_{m,l+1}(q)A_{m,k-1}(q)\nonumber\\
&=&\left[f_{l}A_{m-1,l-1}(q)+g_{l}A_{m-1,l}(q)+h_{l}A_{m-1,l+1}(q)\right]\times\nonumber\\
&&\left[f_{k}A_{m-1,k-1}(q)+g_{k}A_{m-1,k}(q)+h_{k}A_{m-1,k+1}(q)\right]-\nonumber\\
&&\left[f_{l+1}A_{m-1,l}(q)+g_{l+1}A_{m-1,l+1}(q)+h_{l+1}A_{m-1,l+2}(q)\right]\times\nonumber\\
&&\left[f_{k-1}A_{m-1,k-2}(q)+g_{k-1}A_{m-1,k-1}(q)+h_{k-1}A_{m-1,k}(q)\right]\nonumber\\
&=&\underbrace{f_{l}f_{k}A_{m-1,l-1}(q)A_{m-1,k-1}(q)-f_{l+1}f_{k-1}A_{m-1,l}(q)A_{m-1,k-2}(q)}+\nonumber\\
&&f_{l}g_kA_{m-1,l-1}(q)A_{m-1,k}(q)-f_{k-1}g_{l+1}A_{m-1,l+1}(q)A_{m-1,k-2}(q)+\nonumber\\
&&[f_{k}g_{l}-f_{l+1}g_{k-1}]A_{m-1,l}(q)A_{m-1,k-1}(q)+\nonumber\\
&&\underbrace{h_{l}h_{k}A_{m-1,l+1}(q)A_{m-1,k+1}(q)-h_{l+1}h_{k-1}A_{m-1,l+2}(q)A_{m-1,k}(q)}+\nonumber\\
&&h_{k}g_{l}A_{m-1,l}(q)A_{m-1,k+1}(q)-h_{l+1}g_{k-1}A_{m-1,l+2}(q)A_{m-1,k-1}(q)+\nonumber\\
&&[h_{l}g_{k}-h_{k-1}g_{l+1}]A_{m-1,l+1}(q)A_{m-1,k}(q)+\nonumber\\
&&\underbrace{g_{l}g_{k}A_{m,l}(q)A_{m,k}(q)-g_{l+1}g_{k-1}A_{m-1,l+1}(q)A_{m-1,k-1}(q)}+\nonumber\\
&&f_{l}h_{k}A_{m-1,l-1}(q)A_{m-1,k+1}(q)-h_{l+1}f_{k-1}A_{m-1,l+2}(q)A_{m-1,k-2}(q)+\nonumber\\
&&[h_{l}f_{k}A_{m-1,l+1}(q)A_{m-1,k-1}(q)-f_{l+1}h_{k-1}A_{m-1,l}(q)A_{m-1,k}(q)].
\end{eqnarray}
In what follows we will prove the nonnegativity of (\ref{eq+main})
in $q$.

Firstly, it follows from the strong $q$-log-concavities of
$(A_{m-1,k}(q))_{0\leq k\leq m-1}$ and $(f_k(q))_{k\geq0}$ that
 \begin{eqnarray}\label{ieq1}A_{m-1,l-1}(q)A_{m-1,k-1}(q)-A_{m-1,l}(q)A_{m-1,k-2}(q)\geq_q0\nonumber\end{eqnarray}
 and \begin{eqnarray}\label{ieq1}f_{l}f_{k}-f_{l+1}f_{k-1}\geq_q0,\nonumber\end{eqnarray}
which implies
\begin{eqnarray}\label{ieq1}f_{l}f_{k}A_{m-1,l-1}(q)A_{m-1,k-1}(q)-f_{l+1}f_{k-1}A_{m-1,l}(q)A_{m-1,k-2}(q)\geq_q0.\end{eqnarray}
Similarly, we also have
\begin{eqnarray}\label{ieq2}h_{l}h_{k}A_{m-1,l+1}(q)A_{m-1,k+1}(q)-h_{l+1}h_{k-1}A_{m-1,l+2}(q)A_{m-1,k}(q)\geq_q0.\end{eqnarray}

Secondly, by the strong $q$-log-concavity of $(A_{m-1,k}(q))_{0\leq
k\leq m-1}$ and $$f_{l}g_{k}+f_{k}g_{l}\geq_q
f_{l+1}g_{k-1}+f_{k-1}g_{l+1},$$ we get
\begin{eqnarray}\label{ieq4}
&&f_{l}g_kA_{m-1,l-1}(q)A_{m-1,k}(q)-f_{k-1}g_{l+1}A_{m-1,l+1}(q)A_{m-1,k-2}(q)+\nonumber\\
&&[f_{k}g_{l}-f_{l+1}g_{k-1}]A_{m-1,l}(q)A_{m-1,k-1}(q)\nonumber\\
&\geq_q&[f_{l}g_k+f_{k}g_{l}-f_{l+1}g_{k-1}]A_{m-1,l}(q)A_{m-1,k-1}(q)-f_{k-1}g_{l+1}A_{m-1,l+1}(q)A_{m-1,k-2}(q)\nonumber\\
&\geq_q&[f_{l}g_k+f_{k}g_{l}-f_{l+1}g_{k-1}-f_{k-1}g_{l+1}]A_{m-1,l}(q)A_{m-1,k-1}(q)\nonumber\\
&\geq_q&0.
\end{eqnarray}
In a similar way, we also have
\begin{eqnarray}\label{ieq5}
&&h_{k}g_{l}A_{m-1,l}(q)A_{m-1,k+1}(q)-h_{l+1}g_{k-1}A_{m-1,l+2}(q)A_{m-1,k-1}(q)+\nonumber\\
&&[h_{l}g_{k}-h_{k-1}g_{l+1}]A_{m-1,l+1}(q)A_{m-1,k}(q)\nonumber\\
&\geq_q&[h_{k}g_{l}+h_{l}g_{k}-h_{k-1}g_{l+1}-h_{l+1}g_{k-1}]A_{m-1,l+1}(q)A_{m-1,k}(q)\nonumber\\
&\geq_q&0.
\end{eqnarray}
Finally,  it follows from the strong $q$-log-concavity of
$(g_k(q))_{k\geq0}$ that
\begin{eqnarray}\label{ieq1}g_{l}g_{k}-g_{l+1}g_{k-1}\geq_q0.\nonumber\end{eqnarray}
Thus we have
\begin{eqnarray}\label{ieq6}
&&\underbrace{g_{l}g_{k}A_{m-1,l}(q)A_{m-1,k}(q)-g_{l+1}g_{k-1}A_{m-1,l+1}(q)A_{m-1,k-1}(q)}+\nonumber\\
&&f_{l}h_{k}A_{m-1,l-1}(q)A_{m-1,k+1}(q)-h_{l+1}f_{k-1}A_{m-1,l+2}(q)A_{m-1,k-2}(q)+\nonumber\\
&&[h_{l}f_{k}A_{m-1,l+1}(q)A_{m-1,k-1}(q)-f_{l+1}h_{k-1}A_{m-1,l}(q)A_{m-1,k}(q)]\nonumber\\
&\geq_q&\underbrace{g_{l}g_{k}[A_{m-1,l}(q)A_{m-1,k}(q)-A_{m-1,l+1}(q)A_{m-1,k-1}(q)]}+\nonumber\\
&&f_{l}h_{k}A_{m-1,l-1}(q)A_{m-1,k+1}(q)-h_{l+1}f_{k-1}A_{m-1,l+2}(q)A_{m-1,k-2}(q)+\nonumber\\
&&\underbrace{[f_{l+1}h_{k-1}+h_{l+1}f_{k-1}-f_{l}h_{k}]}A_{m-1,l+1}(q)A_{m-1,k-1}(q)-\nonumber\\
&&f_{l+1}h_{k-1}A_{m-1,l}(q)A_{m-1,k}(q)]\nonumber\\
&=&\underbrace{g_{l}g_{k}[A_{m-1,l}(q)A_{m-1,k}(q)-A_{m-1,l+1}(q)A_{m-1,k-1}(q)]}+\nonumber\\
&&\underbrace{f_{l+1}h_{k-1}[A_{m-1,l+1}(q)A_{m-1,k-1}(q)-A_{m-1,l}(q)A_{m-1,k}(q)]}+\nonumber\\
&&f_{l}h_{k}[A_{m-1,l-1}(q)A_{m-1,k+1}(q)-A_{m-1,l+1}(q)A_{m-1,k-1}(q)]+\nonumber\\
&&\underbrace{h_{l+1}f_{k-1}[A_{m-1,l+1}(q)A_{m-1,k-1}(q)-A_{m-1,l+2}(q)A_{m-1,k-2}(q)]}\nonumber\\
&\geq_q&\underbrace{[g_{l}g_{k}-f_{l+1}h_{k-1}][A_{m-1,l}(q)A_{m-1,k}(q)-A_{m-1,l+1}(q)A_{m-1,k-1}(q)]}\nonumber\\
&\geq_q&0
\end{eqnarray}
since
$$f_{l}h_{k}+f_{k}h_{l}\geq_qf_{k-1}h_{l+1}+f_{l+1}h_{k-1}$$
and $$g_kg_l\geq_q f_{l+1}h_{k-1}.$$

Thus, by (\ref{eq+main})- (\ref{ieq6}), we get
$$A_{m,k}(q)A_{m,l}(q)- A_{m,l+1}(q)A_{m,k-1}(q)\geq_q 0$$
for $1\leq k\leq l\leq m$. The proof is complete.
\end{proof}

The following special case related to the Riordan array
\cite{MRSV97} may be more interesting.
\begin{prop}
Define the matrix $[A_{n,k}(q)]_{n,k\geq0}$ recursively:
\begin{eqnarray*}
A_{0,0}(q)&=&1, \quad A_{0,k}(q)=0\quad(k>0),\\
A_{n,0}(q)&=&e(q)\,A_{n-1,0}(q)+h(q)\,A_{n-1,1}(q),\\
A_{n,k}(q)&=&A_{n-1,k-1}(q)+g(q)\,A_{n-1,k}(q)+h(q)\,A_{n-1,k+1}(q)\quad
(n,k\geq 1).
\end{eqnarray*}
If $e(q)g(q)\geq_q h(q) \geq_q 0$ and $g(q)\geq_q e(q)\geq_q 0$,
then each row $(A_{n,k}(q))_{0\leq k\leq n}$ is strongly
$q$-log-concave.
\end{prop}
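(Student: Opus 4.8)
The plan is to obtain this proposition as an immediate specialization of Theorem \ref{thm+q+log-concave}. First I would rewrite the recursion of the proposition in the form (\ref{re}) by choosing the coefficient polynomials $f_{n,k}(q)\equiv 1$ and $h_{n,k}(q)\equiv h(q)$ for all $k\geq 0$, together with $g_{n,0}(q)=e(q)$ and $g_{n,k}(q)=g(q)$ for $k\geq 1$ (none of these depends on $n$). For $k\geq 1$ the recursion (\ref{re}) then reads exactly as in the statement; for $k=0$ it reads $A_{n,0}(q)=f_{n,0}(q)A_{n-1,-1}(q)+e(q)A_{n-1,0}(q)+h(q)A_{n-1,1}(q)$, and since $A_{m,-1}(q)\equiv 0$ the choice $f_{n,0}(q)=1$ is immaterial and recovers the boundary recursion of the proposition. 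All three families have nonnegative coefficients, using $g(q)\geq_q e(q)\geq_q 0$ and $h(q)\geq_q 0$.

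Next I would verify the two groups of hypotheses of Theorem \ref{thm+q+log-concave}. For the strong $q$-log-concavity in $k$ of each coefficient family: $(f_{n,k}(q))_{k\geq 0}$ and $(h_{n,k}(q))_{k\geq 0}$ are constant, hence trivially strongly $q$-log-concave, while for $(g_{n,k}(q))_{k\geq 0}=(e(q),g(q),g(q),\ldots)$ the inequality $g_{n,l}g_{n,k}\geq_q g_{n,l+1}g_{n,k-1}$ for $l\geq k\geq 1$ is an equality unless $k=1$, in which case it is $g(q)^2\geq_q e(q)g(q)$, and this follows by multiplying $g(q)-e(q)\geq_q 0$ by $g(q)\geq_q 0$. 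For the four cross inequalities, needed for $1\leq k\leq l\leq n$: since every $f$ is $1$ and every $h$ is $h(q)$, the first collapses to $g_{n,k}+g_{n,l}\geq_q g_{n,k-1}+g_{n,l+1}$, which is trivial for $k\geq 2$ and is exactly $g(q)\geq_q e(q)$ for $k=1$; the third collapses to $2h(q)\geq_q 2h(q)$; the second follows from the first after multiplication by $h(q)\geq_q 0$; and the fourth, $g_{n,k}g_{n,l}\geq_q f_{n,l+1}h_{n,k-1}$, amounts to $g(q)^2\geq_q h(q)$, which I obtain from the chain $g(q)^2\geq_q e(q)g(q)\geq_q h(q)$, the second step being the hypothesis $e(q)g(q)\geq_q h(q)$ together with transitivity of $\geq_q$. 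With all hypotheses verified, Theorem \ref{thm+q+log-concave} yields that every row $(A_{n,k}(q))_{0\leq k\leq n}$ is strongly $q$-log-concave.

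The argument is essentially bookkeeping, so I do not expect a genuine obstacle; the only place where the hypotheses carry real weight is the fourth cross inequality, which forces $g(q)^2\geq_q h(q)$ and hence requires $e(q)g(q)\geq_q h(q)$ plus the elementary facts that $\geq_q$ is transitive and is preserved under multiplication by a polynomial with nonnegative coefficients. The single point that needs a moment's care is the left boundary $k=0$: because $g_{n,0}(q)=e(q)$ differs from $g_{n,k}(q)=g(q)$ for $k\geq 1$, one must confirm that this lone exceptional entry breaks neither the strong $q$-log-concavity of the $g$-family nor the cross inequalities, and this is precisely where $g(q)\geq_q e(q)$ is used.
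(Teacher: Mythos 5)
Your proposal is correct and is precisely the intended derivation: the paper states this proposition as a special case of Theorem \ref{thm+q+log-concave} without writing out the verification, and your choice $f_{n,k}\equiv 1$, $h_{n,k}\equiv h(q)$, $g_{n,0}=e(q)$, $g_{n,k}=g(q)$ for $k\geq 1$, together with the checks that reduce everything to $g(q)\geq_q e(q)$, $g(q)^2\geq_q e(q)g(q)\geq_q h(q)$, and $h(q)\geq_q 0$, is exactly the bookkeeping the paper leaves to the reader.
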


Applying Theorem \ref{thm+q+log-concave} to combinatorial arrays in
Example \ref{exm}, we have the following results in a unified
manner.
\begin{cor}
Each row sequence of the Bell polynomials triangle, the Eulerian
polynomials triangle and the Narayana polynomials triangle is
strongly $q$-log-concave, respectively.
\end{cor}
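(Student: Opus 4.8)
The plan is to apply Theorem \ref{thm+q+log-concave} to each of the three triangles, so the first step is to read the coefficient polynomials off from the recurrences recorded in Example \ref{exm} and to check the theorem's hypotheses one by one. In all three cases $f_{n,k}(q)\equiv 1$, independently of $n$, while
$$
\bigl(g_{n,k}(q),\,h_{n,k}(q)\bigr)=
\begin{cases}
\bigl(q+k,\ q(k+1)\bigr) & \text{(Bell triangle)},\\
\bigl((q+1)k+1,\ (k+1)^2q\bigr) & \text{(Eulerian triangle)},\\
\bigl(q+1,\ q\bigr) & \text{(Narayana triangle)},
\end{cases}
$$
the last line being valid for $k\geq1$, with the boundary values $g_{n,0}(q)=h_{n,0}(q)=q$ in the Narayana case. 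Because $f_{n,k}\equiv1$, the three ``strongly $q$-log-concave in $k$'' hypotheses collapse to the requirement that $(g_{n,k})_{k\geq0}$ and $(h_{n,k})_{k\geq0}$ be strongly $q$-log-concave, and the four cross-inequalities reduce to
$$
g_k+g_l\geq_q g_{k-1}+g_{l+1},\qquad
h_k+h_l\geq_q h_{l+1}+h_{k-1},
$$
$$
g_lh_k+g_kh_l\geq_q g_{l+1}h_{k-1}+g_{k-1}h_{l+1},\qquad
g_kg_l\geq_q h_{k-1},
$$
to be verified for $1\leq k\leq l$. Once these are in hand, Theorem \ref{thm+q+log-concave} gives the strong $q$-log-concavity of each row, and for any fixed $q\geq0$ ordinary log-concavity follows.

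For the Bell and Narayana triangles this is routine. In the Bell case $g_k=q+k$ and $h_k=q(k+1)$ are affine in $k$, so the first two cross-inequalities hold with equality; the strong $q$-log-concavity of $(g_k)$ and of $(h_k)$ amounts to $(q+l)(q+k)-(q+l+1)(q+k-1)=l-k+1\geq0$ and $(l+1)(k+1)-(l+2)k=l-k+1\geq0$; the mixed inequality reduces to $2(l-k+1)q\geq_q0$; and $g_kg_l\geq_q h_{k-1}$ is immediate since $g_kg_l-h_{k-1}=q^2+lq+kl$. In the Narayana case the coefficients are constant for $k\geq1$, so every required inequality is trivial (the boundary value $g_0=h_0=q$ causing no difficulty); equivalently, this triangle is a Riordan-type array, and one need only check $q(q+1)\geq_q q\geq_q0$ and $q+1\geq_q q\geq_q0$.

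The real work — and the step I expect to be the main obstacle — is the Eulerian triangle, where the super-diagonal coefficient $h_{n,k}=(k+1)^2q$ is \emph{quadratic} in $k$. There $(h_k)$ is still strongly $q$-log-concave (because $(k+1)^2$ is log-concave), and both the conditions on $g$ alone and the mixed inequality $g_lh_k+g_kh_l\geq_q g_{l+1}h_{k-1}+g_{k-1}h_{l+1}$ can be confirmed by an explicit, if lengthier, polynomial computation (for $1\leq k\leq l$ the relevant differences have nonnegative coefficients, with a comfortable margin). The delicate point is the cross-inequality $h_k+h_l\geq_q h_{l+1}+h_{k-1}$: for a quadratic $h$ it says that the first differences of $(h_k)$ are non-increasing, which is not the case, so Theorem \ref{thm+q+log-concave} does not apply to the Eulerian triangle verbatim. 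I expect this to be circumvented by sharpening the inductive step of Theorem \ref{thm+q+log-concave}: that inequality is used only to re-estimate a single summand of the expansion (\ref{eq+main}), while for the Eulerian array there is a large surplus in the remaining hypotheses — notably in $g_kg_l\geq_q h_{k-1}$, where $g_kg_l-h_{k-1}$ has leading term $klq^2$ — which should absorb the deficit; carrying out this book-keeping carefully (or, failing that, verifying the rows of the Eulerian triangle directly) is what I expect to be the hard part. With it in place, the strong $q$-log-concavity of the rows of all three triangles, and hence the Corollary, follows.
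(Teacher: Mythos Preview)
Your approach is exactly the paper's: the corollary is asserted as an immediate application of Theorem~\ref{thm+q+log-concave} to the three arrays in Example~\ref{exm}, and the paper gives no further argument. For the Bell and Narayana triangles your verification of the hypotheses is correct and straightforward, and this part of your proposal is complete.

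Your diagnosis of the Eulerian case is also correct, and it is worth stating plainly: with $f_k\equiv1$ and $h_k=(k+1)^2q$, the third cross-inequality of Theorem~\ref{thm+q+log-concave} reads $h_k+h_l\geq_q h_{k-1}+h_{l+1}$, and one computes
\[
h_k+h_l-h_{k-1}-h_{l+1}=q\bigl[(k+1)^2+(l+1)^2-k^2-(l+2)^2\bigr]=2q(k-l-1)\leq_q -2q
\]
for $1\leq k\leq l$, so the hypothesis genuinely fails. The paper does not address this; it simply claims that the theorem applies. So what you have found is not a gap in your own argument relative to the paper's, but a gap in the paper's argument that your more careful write-up exposes. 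Your suggested remedy --- revisiting the inductive expansion~(\ref{eq+main}) and using the slack in $g_kg_l-f_{l+1}h_{k-1}$ to compensate --- is reasonable in spirit (for the Eulerian data $g_kg_l-h_{k-1}=(q+1)^2kl+(q+1)(k+l)+1-k^2q$ does have a healthy $q^2kl$ surplus), but as you acknowledge it is not carried out, and until it is, neither your proposal nor the paper actually proves the Eulerian part of the corollary via Theorem~\ref{thm+q+log-concave} as stated.
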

\begin{cor}
Each row sequence in the Catalan triangles of Aigner and Shaprio,
the Motzkin triangle, the large Schr\"oder triangle, and the Bell
triangle is log-concave, respectively.
\end{cor}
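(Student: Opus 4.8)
The plan is to deduce the corollary directly from Theorem~\ref{thm+q+log-concave}, taking all coefficient polynomials to be constants, so that ``strong $q$-log-concavity in $k$'' is just strong log-concavity of a nonnegative real sequence, which for a sequence without internal zeros is the same as log-concavity (via $a_ia_j\ge a_{i+1}a_{j-1}$ for $i\ge j$, exactly as in the proof of Theorem~\ref{thm+binom}). First I would read off the three coefficient sequences in the recurrence (\ref{re}) for each triangle in Example~\ref{exm}; in every case they are independent of $n$. For Aigner's Catalan triangle $f_k\equiv1$, $h_k\equiv1$, and $g_0=1$, $g_k=2$ $(k\ge1)$; for Shapiro's Catalan triangle $f_k\equiv1$, $g_k\equiv2$, $h_k\equiv1$; for the Motzkin triangle $f_k\equiv g_k\equiv h_k\equiv1$; for the large Schr\"oder triangle $f_k\equiv1$, $h_k\equiv2$, and $g_0=1$, $g_k=2$ $(k\ge1)$; and for the Bell triangle $f_k\equiv1$, $g_k=h_k=k+1$. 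The value $f_{n,0}$ never occurs since it multiplies $A_{n-1,-1}=0$, so I set $f_{n,0}:=1$, making $(f_{n,k})_k\equiv(1,1,\dots)$, which has no internal zeros.

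Next I would verify the hypotheses of Theorem~\ref{thm+q+log-concave}. Strong log-concavity of each of $(f_{n,k})_k$, $(g_{n,k})_k$, $(h_{n,k})_k$ is immediate: the only sequences arising are $(1,1,\dots)$, $(2,2,\dots)$, $(1,2,2,\dots)$ and $(1,2,3,\dots)$, all log-concave. Since $f_k\equiv1$ throughout, the first and third bilinear conditions collapse to the concavity statements $g_k+g_l\ge g_{k-1}+g_{l+1}$ and $h_k+h_l\ge h_{k-1}+h_{l+1}$ for $1\le k\le l$, which hold because the difference sequences $(g_k-g_{k-1})_k$ and $(h_k-h_{k-1})_k$ are weakly nonincreasing in each case: they are $(1,0,0,\dots)$, identically $0$, or identically $1$. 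The second condition $g_lh_k+g_kh_l\ge g_{l+1}h_{k-1}+g_{k-1}h_{l+1}$ reduces to the first whenever $h$ is constant, and for the Bell triangle (where $g=h$ with $c_j:=j+1$) it becomes $c_kc_l\ge c_{k-1}c_{l+1}$, i.e.\ $l+1\ge k$, which holds since $l\ge k$.

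The one condition deserving care is the last, $g_kg_l\ge_q f_{l+1}h_{k-1}$, because of the boundary value $h_0$ at $k=1$: for the Catalan, Shapiro and Motzkin triangles $g_kg_l\ge1\ge f_{l+1}h_{k-1}$; for the large Schr\"oder triangle $g_kg_l=4\ge2=f_{l+1}h_{k-1}$ (using $g_k,g_l\ge2$ for $k,l\ge1$ and $h_0=2$); for the Bell triangle $g_kg_l=(k+1)(l+1)\ge2(k+1)>k=f_{l+1}h_{k-1}$. With all hypotheses verified, Theorem~\ref{thm+q+log-concave} yields that each row of each of the five triangles is strongly log-concave, hence in particular log-concave, which is the assertion. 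I do not expect any genuine obstacle here: beyond the small bookkeeping at the $k=0$ column (handled by the choice $f_{n,0}:=1$), the proof is a short finite list of elementary inequality checks.
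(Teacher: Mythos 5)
Your proposal is correct and follows exactly the paper's (unstated) route: the paper simply asserts that the corollary follows by applying Theorem~\ref{thm+q+log-concave} to the triangles of Example~\ref{exm}, and your reading-off of the coefficient sequences $f_{n,k},g_{n,k},h_{n,k}$ (including the boundary values $g_0,h_0$ and the harmless convention $f_{n,0}:=1$) together with the elementary verification of the four bilinear hypotheses is precisely the omitted bookkeeping. The only thing you supply beyond the paper is this explicit checking, which the paper leaves to the reader.
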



\end{document}